\documentclass[12pt]{amsart}

\usepackage[margin = 3cm]{geometry}

\usepackage{amsmath}
\usepackage{amssymb}
\usepackage{amsthm}
\usepackage{amscd}
\usepackage{delarray}
\usepackage{enumitem}
\usepackage{colonequals}

\headsep=1cm \numberwithin{equation}{section}
\newtheorem{theorem}{Theorem}[section]
\newtheorem{lemma}[theorem]{Lemma}
\newtheorem{proposition}[theorem]{Proposition}

\theoremstyle{definition}

\newtheorem{convention and reminder}[theorem]{Convention and Reminder}
\newtheorem{convention and remark}[theorem]{Convention and Remark}
\newtheorem{definition and remark}[theorem]{Definition and Remark}

\newtheorem{reminders and definition}[theorem]{Reminders and Definition}
\newtheorem{notation}[theorem]{Notation}
\newtheorem{notation and remarks}[theorem]{Notation and Remarks}
\newtheorem{notation and remark}[theorem]{Notation and Remark}
\newtheorem{example}[theorem]{Example}

\begin{document}

\title{Secant rank and syzygies of projections of elliptic normal curves}

\author[C. Han]{Changho Han}
\email{changho\_han@korea.ac.kr}
\address{Changho Han, Department of Mathematics, Korea University, Seoul 02841, Republic of Korea}

\author[E. Park]{Euisung Park}
\email{euisungpark@korea.ac.kr}
\address{Euisung Park, Department of Mathematics, Korea University, Seoul 02841, Republic of Korea}

\subjclass[2020]{Primary: 14N15 Secondary: 51N35}
\keywords{Elliptic Curve, Linear Projection, Secant Rank, Green--Lazarsfeld index}

\begin{abstract}
We study the syzygies of projections of elliptic normal curves. 
Let $C \subset \mathbb{P}^{d-1}$ be an elliptic normal curve of degree $d \ge 5$, and let $C_q$ denote the projection of $C$ from a point $q$. 
We obtain sharp bounds for the Green--Lazarsfeld index of $C_q$ in terms of the secant rank of $q$. 
More precisely, if $q \in C^s \setminus C^2$, where $C^s$ is the $s$-th secant variety of $C$, then $\mathrm{index}(C_q) \le s-3$, and equality holds for a general point $q$ of $C^s$.
In particular, $\mathrm{index}(C_q) = \lceil \frac{d}{2} \rceil - 3$ for a general point $q$ in $\mathbb{P}^{d-1}$.
The proof realizes projected elliptic curves as hyperplane sections of elliptic ruled surface scrolls and exploits the known syzygetic properties of these scrolls.
\end{abstract}

\maketitle \thispagestyle{empty}

\section{Introduction}
\noindent Throughout this paper we work over an algebraically closed field $\mathbb{K}$ of characteristic zero. 

The modern study of syzygies of projective varieties began with the foundational work of Mark Green \cite{G}, who introduced Koszul cohomology as a tool for understanding minimal free resolutions of homogeneous coordinate rings. 
For a projective variety $X \subset \mathbb{P}^r$, one says that $X$ satisfies property $N_{2,p}$ if the homogeneous ideal of $X$ is generated by quadrics and the syzygies among them are generated by linear syzygies up to the $p$-th step. 
The maximal such integer $p$ is called the \emph{Green--Lazarsfeld index} of $X$ and is denoted by $\mathrm{index}(X)$;
by convention, $\mathrm{index}(X)=0$ if $X$ fails to satisfy property $N_{2,1}$.
It is well known that
\[
\mathrm{index}(X) < \delta (X),
\]
where $\delta (X)$ denotes the largest integer $p$ such that $X$ admits a $(p+2)$-secant $p$-plane (cf. \cite[Theorem 1.1]{EGHP}).

Now let $C \subset \mathbb{P}^{d-g}$ be a linearly normal smooth projective curve of genus $g$ and degree $d \ge 2g+2$.
Green \cite{G} proved that $C$ satisfies property $N_{2,d-2g-1}$.
Green and Lazarsfeld \cite{GL} gave another proof and characterized the extremal case.
In particular, $\mathrm{index}(C) \ge d-2g-1$, and equality holds for all $d \geq 3g-2$. 
Recent developments have further clarified the role of secant geometry in 
syzygies of projective curves. 
The Green--Lazarsfeld secant conjecture predicts that the failure of property $N_p$ for a curve embedded by a sufficiently positive line bundle is governed by the existence of $(p+2)$-secant $p$-planes. 
This conjecture has been confirmed in several cases: Farkas and Kemeny \cite{FarkasKemeny} proved the generic version of the Green--Lazarsfeld secant conjecture, while Kemeny \cite{Kemeny} established the extremal case for curves of arbitrary gonality.

Another natural problem is to understand how the syzygies of a projective curve change under linear projections. 
For $q \in \mathbb{P}^{d-g} \setminus C^2$, let
\[
C_q \colonequals \pi_q(C) \subset \mathbb{P}^{d-g-1}
\]
be the projection of $C$ from $q$.
When $d \geq 2g+3$, $C_q$ is always $3$-regular by \cite[Theorem 1]{Noma}, so its homogeneous ideal is generated by quadratic and cubic polynomials. 
It is therefore natural to ask: how does the Green--Lazarsfeld index of $C_q$ depend on the position of the projection center $q$? 

To measure the position of $q$ relative to $C$, we use secant varieties. 
Recall that the $s$-th secant variety $C^s$ of $C$ is the Zariski closure of the union of linear spans of $s$ general points on $C$;
these varieties form a strictly increasing filtration
\[
C \subset C^2 \subset C^3 \subset \cdots \subset C^{\left \lceil \frac{d-g+1}{2}  \right \rceil} = \mathbb{P}^{d-g}
\] 
with $\dim C^s = 2s-1$. 
For every $q \in \mathbb{P}^{d-g}$,
the \emph{secant rank} of $q$ with respect to $C$ is defined by
\[
\mathrm{rk}_C (q) \colonequals \min \{ s \mid q \in C^s \}.
\]
Suppose that $q$ is a general point of $C^s$ for some $s \ge 3$. Then $\mathrm{rk}_C (q) =s$, and $q$ is contained in an $(s-1)$-plane spanned by $s$ points of $C$. The latter property implies that the projected curve $C_q$ admits an $s$-secant $(s-2)$-plane and hence
\[
\mathrm{index}(C_q) \leq s-3.
\]
Combining this observation with upper semicontinuity of graded Betti numbers, we obtain the following result.

\begin{theorem}\label{thm:upper bound of index}
Let $C \subset \mathbb{P}^{d-g}$ be a linearly normal smooth projective curve of genus $g$ and degree $d \geq 2g+3$, and let $q \in \mathbb{P}^{d-g} \setminus C^2$ be a point. Then
\[
\mathrm{index}(C_q) \leq \mathrm{rk}_C (q)-3.
\]
\end{theorem}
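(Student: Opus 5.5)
Set $s = \mathrm{rk}_C(q)$. Since $q \notin C^2$, we have $s \ge 3$. The argument has two parts. First, for $q$ general in $C^s$, we exhibit an $s$-secant $(s-2)$-plane to $C_q$ and apply \cite[Theorem 1.1]{EGHP}. Second, we pass from general to arbitrary points of $C^s \setminus C^2$ by upper semicontinuity of graded Betti numbers in the flat family of projected curves.

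\emph{The general point.} Let $q$ be a general point of $C^s$. Then $q$ lies in the span $\Lambda = \langle p_1,\dots,p_s\rangle$ of $s$ distinct points of $C$. Because $s \le \lceil (d-g+1)/2\rceil$ and $d \ge 2g+3$, any $s$ points of $C$ impose independent conditions on $|L|$; indeed $h^1(L(-p_1-\cdots-p_s))=0$ since $\deg L(-\sum p_i) = d-s \ge 2g-1$. Hence $\Lambda$ is a genuine $(s-1)$-plane. Projecting from $q\in\Lambda$, the images $\pi_q(p_1),\dots,\pi_q(p_s)$ lie in the $(s-2)$-plane $\pi_q(\Lambda)$. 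These images are distinct, because $q \notin C^2$ forces $\pi_q$ to be injective on $C$, so $C_q$ has an $s$-secant $(s-2)$-plane, i.e.\ a $(p+2)$-secant $p$-plane with $p=s-2$. Then \cite[Theorem 1.1]{EGHP} gives that $C_q$ fails $N_{2,s-2}$, so $\mathrm{index}(C_q) \le s-3$. One point to check is that the EGHP criterion applies to $C_q$, which is nondegenerate and reduced; the statement there is for reduced schemes meeting the plane in a finite scheme of length $p+2$. If $\pi_q(\Lambda)\cap C_q$ is larger than these $s$ points, the bound only improves, since a plane meeting $C_q$ in a finite scheme of length at least $p+2$ still obstructs $N_{2,p}$. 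If the intersection were infinite, $C_q$ would lie in a plane, which is absurd because $C_q$ is nondegenerate.

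\emph{Arbitrary $q$.} Consider the family of curves $C_q$ as $q$ ranges over the irreducible open set $U = C^s \setminus C^2$. For $q\notin C^2$, $\pi_q|_C$ is an isomorphism onto its image, so $C_q$ has constant Hilbert polynomial $dt+1-g$ in $\mathbb{P}^{d-g-1}$. Hence these curves form a flat family over $U$. By Noma's theorem \cite[Theorem 1]{Noma} each $C_q$ is $3$-regular. Combined with constant Hilbert polynomial, this gives a constant Hilbert function of the homogeneous coordinate rings, so the graded Betti numbers $\beta_{i,j}(C_q)$ are upper semicontinuous in $q$.

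The failure of $N_{2,s-2}$ means $\beta_{i,j}\neq 0$ for some $i\le s-2$ and $j\ge 2$, where we index so that $j=1$ corresponds to linear syzygies among quadrics. This is an open-type condition in reverse: the locus where $\beta_{i,j}(C_q)\ge 1$ is closed in $U$. It contains the dense set of general points, so it is all of $U$. Therefore $\mathrm{index}(C_q)\le s-3$ for every $q\in C^s\setminus C^2$.

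The main obstacle is justifying semicontinuity. It requires that the coordinate rings $S/I_{C_q}$, not merely the schemes, vary flatly. This is exactly where the constancy of the Hilbert function from $3$-regularity and $h^1(\mathcal{O}_{C_q}(t))$ data enters. Concretely, we would check that $h^0(\mathcal{I}_{C_q}(t))$ is independent of $q$ for all $t$. For $t\ge 2$ this follows from the vanishing $h^1(\mathcal{I}_{C_q}(t))=0$ given by $3$-regularity and the fact that $h^0(\mathcal{O}_C(t))$ is constant. For $t=1$ it follows from $h^1(\mathcal{I}_{C_q}(1))=1$, a codimension-one defect of linear normality that is constant in $q$.
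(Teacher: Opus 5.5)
Your proof is correct and has the same skeleton as the paper's: for general $q\in C^s$ you exhibit an $s$-secant $(s-2)$-plane to $C_q$ and apply \cite[Theorem 1.1]{EGHP}, then use Noma's $3$-regularity and upper semicontinuity of Betti numbers to reach every $q$. The difference lies in how you justify semicontinuity.

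The paper (Proposition~\ref{prop:upper semicontinuity}) replaces $C_q$ by ${\rm join}(q,C)\subset\mathbb{P}^{d-g}$, which has the same Betti numbers, so the family lives in a fixed $U\times\mathbb{P}^{d-g}$. It then uses $H^1(\mathcal{I}(2))=0$ and Lemma~\ref{lem:exactsequence} to write $\beta_{i,2}=h^1(\bigwedge^{i+1}\mathcal{M}\otimes\mathcal{I}(2))$, and applies the semicontinuity theorem to a single sheaf on a flat projective family. You instead check that the whole Hilbert function of $S/I_{C_q}$ is independent of $q$, using $3$-regularity for $t\ge 2$ and nondegeneracy for $t=1$. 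Over the reduced base, the graded pieces of the coordinate rings then form vector bundles, and the Koszul complex computing $\operatorname{Tor}$ is a complex of vector bundles. This is more elementary and handles all $\beta_{i,j}$ at once. The paper's version works in a fixed ambient space and is packaged for reuse in Theorem~\ref{thm:main}(2) and Example~\ref{ex:Veronese cubic surface}. You should, however, say how you handle the fact that the target $\mathbb{P}^{d-g-1}$ of $\pi_q$ moves with $q$: either project locally to a fixed hyperplane missing $q$, or use joins as the paper does.

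One small correction: the inequality $d-s\ge 2g-1$ fails in general. For example, $g=10$, $d=23$, $s=7$ gives $d-s=16<19$. So your claim that ``any $s$ points impose independent conditions'' is not justified. It is also unnecessary. Any $s\le d-g+1$ general points of a nondegenerate curve are linearly independent. In any case, \cite[Theorem 1.1]{EGHP} only needs some $(s-2)$-plane meeting $C_q$ in a finite scheme of length at least $s$, which you already observe.
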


When $C \subset \mathbb{P}^d$ is a rational normal curve, the second-named author showed in \cite{P07} that the inequality of Theorem 1.1 is an equality: for every $q \in \mathbb{P}^d \setminus C^2$,
\[
\mathrm{index}(C_q)=\mathrm{rk}_C (q)-3.
\]
It is then natural to ask whether such an equality extends to curves of positive genus.
Partial results were obtained by Lee and Park \cite{LP13}, who showed that for a fixed $p$ and sufficiently large $d$, a general projection $C_q$ satisfies property $N_{2,p}$.

Our main theorem shows that for elliptic normal curves, the equality holds for a general projection center:

\begin{theorem}\label{thm:main}
Let $C \subset \mathbb{P}^{d-1}$ be an elliptic normal curve of degree $d \ge 5$, and let $q \in \mathbb{P}^{d-1} \setminus C^2$ be a point with $s \colonequals \mbox{\rm rk}_{C} (q)$. Then
\begin{enumerate}[leftmargin=7mm, labelsep=1.5mm, itemsep=0pt, topsep=2pt, font=\upshape]
	\item ${\rm index} (C_q) \leq s-3$.
	\item If $q$ is a general point of $C^s$, then ${\rm index} (C_q) = s-3$.
	\item If $q$ is a general point of $\mathbb{P}^{d-1}$, then ${\rm index} (C_q) = \lceil \frac{d}{2}\rceil -3$.
\end{enumerate}
\end{theorem}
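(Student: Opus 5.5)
Part (1) is immediate from Theorem~\ref{thm:upper bound of index} with $g=1$, since $d \ge 5 = 2g+3$. Part (3) follows from (2): a general point of $\mathbb{P}^{d-1}$ is a general point of $C^{s_0}$ with $s_0 = \lceil \frac{d}{2} \rceil$, because $C^{s_0} = \mathbb{P}^{d-1}$ and $C^{s_0-1}$ is a proper subvariety. So the content is the lower bound in (2). Since graded Betti numbers are upper semicontinuous in $q$, it suffices to exhibit, for each $3 \le s \le \lceil d/2 \rceil$, a dense open set of points $q \in C^s$ (or a single suitable $q$) with $\mathrm{index}(C_q) \ge s-3$.

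To produce such $q$, I will realize $C_q$ as a hyperplane section of an elliptic ruled surface scroll, as the abstract suggests. Take a general point $q \in C^s$. It lies on the span $\langle D \rangle \cong \mathbb{P}^{s-1}$ of a divisor $D$ of degree $s$ on $C$. Consider a rank-two vector bundle $\mathcal{E}$ on $C$ of degree $d$ and the scroll $S = \mathbb{P}(\mathcal{E}) \subset \mathbb{P}^{d-1}$ embedded by $\mathcal{O}(1)$, where $h^0(\mathcal{E}) = d$. Choose $\mathcal{E}$ as an extension $0 \to \mathcal{O}_C(D') \to \mathcal{E} \to \mathcal{O}_C(H-D') \to 0$ (or the reverse), with degrees roughly $s$ and $d-s$, so that:
\begin{enumerate}
\item $S$ contains a curve $C' \cong C$ as a hyperplane section, with $C'$ isomorphic to $C_q$ under projection;
\item equivalently, $C_q$ is the image of a section curve of $S$ in a hyperplane $\mathbb{P}^{d-2}$.
\end{enumerate}
Concretely, the curve $C \subset \mathbb{P}^{d-1}$ together with the point $q$ determines the cone over $C_q$. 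I would instead construct $S \subset \mathbb{P}^{d-1}$ containing a unisecant curve of degree $d$ such that a hyperplane section of $S$ is projectively equivalent to $C_q$. The degree splitting of $\mathcal{E}$ (minimal section degree $e \approx s$) is matched to $\mathrm{rk}_C(q)=s$ by counting parameters. Both the pairs $(C,q)$ with $q$ general in $C^s$ and the hyperplane sections of these scrolls form irreducible families, so a dimension count plus the secant condition identifies a general member of one with a general member of the other.

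Next I will use known syzygies of elliptic scrolls. The scroll $S$, with minimal section of degree $e$, satisfies $N_{2,p}$ for $p$ up to $e-3$ or similar. I would take this from the literature on elliptic ruled surfaces, or prove it via Koszul cohomology on $\mathbb{P}(\mathcal{E})$ using the pushforward to $C$ and Green's vanishing for line bundles on $C$. Since $S$ is arithmetically Cohen--Macaulay (or at least its hyperplane section inherits the resolution when $H^1(\mathcal{I}_S(k))$ vanishes appropriately), the graded Betti numbers of $S$ and of its general hyperplane section agree. This gives $\mathrm{index}(C_q) \ge \mathrm{index}(S) \ge s-3$.

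The main obstacle is the precise matching step. It must be shown that the hyperplane sections of the correct scroll are exactly projections from points of secant rank $s$, with the right generality, and that $S$ is projectively normal enough for Betti numbers to pass to hyperplane sections. The exact index of the scroll in terms of the splitting type of $\mathcal{E}$ must also be verified. I would first try the case $e = s$, using a decomposable or the indecomposable Atiyah bundle as appropriate. I would check that the hyperplane section through the relevant fiber configuration yields $q$ on an $s$-secant $(s-1)$-plane.
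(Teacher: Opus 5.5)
Your treatment of (1) and (3) matches the paper. So does the reduction of (2) to exhibiting one point $q\in C^s$ with $\mathrm{index}(C_q)\ge s-3$: any such $q$ automatically lies in $C^s\setminus C^{s-1}$ by (1), so Proposition~\ref{prop:upper semicontinuity} applies. The idea of using elliptic scrolls whose index is (degree of the minimal section) $-3$ is also the paper's. But the step you flag as the main obstacle is a genuine gap, and it is the whole content of (2). You never show that a hyperplane section of your scroll is $C_q$ for the \emph{given} $C$ with $\mathrm{rk}_C(q)=s$. Your substitute (two irreducible families of the same dimension, so general members can be identified) is not an argument: equal dimension gives no dominant map between the families, and no map has been constructed. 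The invariants also cannot be chosen only approximately. You need the minimal section to have degree exactly $s$, i.e.\ normalized invariant $e=d-2s$ (which is $\ge -1$ precisely because $s\le\lceil d/2\rceil$) and $b=d-s$. Then $b\ge e+3$ and the index is $b-e-3=s-3$ by Notation and Remarks~\ref{elliptic ruled surface}.

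The paper closes the gap by going from the scroll to $q$, using three short observations (Lemma~\ref{lem:construction}).
\begin{enumerate}
\item For a section curve $X\in|C_0+\mathfrak{b}\mathfrak{f}|$, pushing $0\to\mathcal{O}_S\to L\to L|_X\to 0$ down to $C$ gives $0\to\mathcal{O}_C\to\mathcal{E}\otimes\mathfrak{b}\to L|_X\to 0$, so $L|_X=\mathfrak{b}^2\otimes\mathfrak{e}$. Choose $\mathfrak{b}$ with $\mathfrak{b}^2\otimes\mathfrak{e}=\mathcal{O}_C(1)$ (possible since $\mathrm{Pic}^0(C)$ is $2$-divisible; your extension with quotient $\mathcal{O}_C(H-D')$ builds this in). Since $X\cong C$ has degree $d$ in $\mathbb{P}^{d-2}$, this shows $X=C_q$ for some $q\notin C^2$.
\item $\Gamma=X\cap C_0$ is a hyperplane section of $C_0$, which spans at most a $\mathbb{P}^{s-1}$. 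So these $s$ points span at most a $\mathbb{P}^{s-2}$ on $C_q$, while on $C$ they span a $\mathbb{P}^{s-1}$. Hence $q$ lies in that $(s-1)$-plane and $\mathrm{rk}_C(q)\le s$. This is the secant configuration you were after; it comes from the minimal section, not from fibers.
\item $\mathrm{rk}_C(q)\ge s$ comes for free: otherwise Theorem~\ref{thm:upper bound of index} would give $\mathrm{index}(C_q)<s-3$.
\end{enumerate}

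If you prefer your direction (from $q$ to the scroll), take the nonsplit extension $0\to\mathcal{O}_C\to\mathcal{E}_q\to\mathcal{O}_C(1)\to 0$ with class $q\in H^1(\mathcal{O}_C(-1))\cong H^0(\mathcal{O}_C(1))^{*}$. The restriction $H^0(\mathcal{E}_q)\to H^0(\mathcal{O}_C(1))$ has image $q^{\perp}$, so the hyperplane given by $\mathcal{O}_C\to\mathcal{E}_q$ cuts $\mathbb{P}(\mathcal{E}_q)\subset\mathbb{P}^{d-1}$ exactly in $C_q$. Moreover, $\mathcal{O}_C(1)(-D)\to\mathcal{O}_C(1)$ lifts to $\mathcal{E}_q$ if and only if $q\in\langle D\rangle$, so the minimal section has degree $\mathrm{rk}_C(q)$, and no dimension count is needed.

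Finally, $S$ is not arithmetically Cohen--Macaulay, since $H^1(\mathcal{O}_S)\cong H^1(\mathcal{O}_C)\neq 0$. What makes the Betti numbers pass to $X$ is projective normality of $S$: it gives $I_X=(I_S+(h))/(h)$ with $h$ a nonzerodivisor on the coordinate ring of $S$.
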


Thus the Green--Lazarsfeld index of a projected elliptic normal curve is governed by the secant rank of the projection center, paralleling the behavior known for rational normal curves.

The key idea of the proof, carried out in Section~\ref{sec:proj_ell_normal_curves}, is to realize projected elliptic curves as hyperplane sections of suitable elliptic ruled surface scrolls and to exploit the known syzygetic properties of these scrolls.

In Section~\ref{sec:rmk_proj_higher_dim_vars}, we observe that for higher-dimensional smooth projective varieties, the analog of Theorem~\ref{thm:main} is not true in general. 
We raise the question of for which higher-dimensional smooth projective varieties the increasing filtration by higher secant varieties reflects the behavior of the Green–Lazarsfeld index of projections.

\medskip

\subsection*{Acknowledgements} 
The authors would like to thank Jong-In Han for helpful comments and conversations.
The first-named author is supported by Korea University grants and the National Research Foundation of Korea (NRF) grant funded by the Korea government (MSIT) (No. RS-2025-24535254). 
The second-named author is supported by the National Research Foundation of Korea (NRF) grant funded by the Korea government (MSIT) (No. 2022R1A2C1002784).  

\section{Preliminaries}
\noindent We review some background material on Koszul cohomology and upper semicontinuity of graded Betti numbers. 
Experts will find no new materials in this section.

\subsection{Koszul cohomological method}
We recall Green's cohomological interpretation \cite{G} of graded Betti numbers of graded modules associated to a coherent sheaf.

\begin{lemma}\label{lem:exactsequence}
For a nonzero coherent sheaf $\mathcal{F}$ on $\mathbb{P}^r$, let
\begin{equation*}
F = \bigoplus_{j \in \mathbb Z} H^0 (\mathbb{P}^r, \mathcal{F}(j))
\end{equation*}
be the graded $S$-module associated to $\mathcal{F}$, where $S$ is the homogeneous coordinate ring of $\mathbb{P}^r$. Then there is an exact sequence
\begin{equation*}
0 \rightarrow \mbox{Tor}^S _{i} (F,\mathbb{K})_{i+j} \rightarrow H^1 (\mathbb{P}^r,\bigwedge^{i+1} \mathcal{M} \otimes \mathcal{F} (j-1) ) \rightarrow \bigwedge^{i+1} V \otimes H^1 (\mathbb{P}^r, \mathcal{F} (j-1))
\end{equation*}
\begin{equation*}
\quad \quad \quad \rightarrow  H^1 (\mathbb{P}^r,\bigwedge^{i} \mathcal{M} \otimes \mathcal{F} (j) ) \rightarrow H^2 (\mathbb{P}^r,\bigwedge^{i+1} \mathcal{M} \otimes \mathcal{F} (j-1) ) \rightarrow \cdots,
\end{equation*}
where $\mathcal{M}=\Omega _{\mathbb{P}^r}(1)$ and $V=H^0
(\mathbb{P}^r,\mathcal{O}_{\mathbb{P}^r}(1))$.
\end{lemma}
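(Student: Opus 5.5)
This is Green's standard Koszul-cohomology exact sequence, and I would derive it from the Euler sequence. On $\mathbb{P}^r$ we have
\[
0 \to \mathcal{M} \to V \otimes \mathcal{O}_{\mathbb{P}^r} \to \mathcal{O}_{\mathbb{P}^r}(1) \to 0,
\]
with $\mathcal{M}=\Omega_{\mathbb{P}^r}(1)$. Taking exterior powers of this sequence of vector bundles gives, for each $i \ge 0$, the short exact sequence
\[
0 \to \bigwedge^{i+1}\mathcal{M} \to \bigwedge^{i+1}V \otimes \mathcal{O}_{\mathbb{P}^r} \to \bigwedge^{i}\mathcal{M}(1) \to 0.
\]
Every term is locally free, so tensoring with $\mathcal{F}(j-1)$ keeps the sequence exact:
\[
0 \to \bigwedge^{i+1}\mathcal{M}\otimes\mathcal{F}(j-1) \to \bigwedge^{i+1}V\otimes\mathcal{F}(j-1) \to \bigwedge^{i}\mathcal{M}\otimes\mathcal{F}(j) \to 0.
\]
Its long exact cohomology sequence reads
\[
\bigwedge^{i+1}V\otimes H^0(\mathcal{F}(j-1)) \xrightarrow{\ \alpha\ } H^0\Bigl(\bigwedge^{i}\mathcal{M}\otimes\mathcal{F}(j)\Bigr) \to H^1\Bigl(\bigwedge^{i+1}\mathcal{M}\otimes\mathcal{F}(j-1)\Bigr) \to \bigwedge^{i+1}V\otimes H^1(\mathcal{F}(j-1)) \to \cdots.
\]
From the $H^1$ term onward this is exactly the sequence in the statement. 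It remains to identify $\operatorname{coker}\alpha$ with $\mathrm{Tor}^S_i(F,\mathbb{K})_{i+j}$; that cokernel then injects into $H^1(\bigwedge^{i+1}\mathcal{M}\otimes\mathcal{F}(j-1))$, which gives the leading $0\to$.

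For the identification, compute $\mathrm{Tor}$ by resolving $\mathbb{K}$ with the Koszul complex $\bigwedge^\bullet V\otimes S(-\bullet)$ and tensoring with $F$. Then $\mathrm{Tor}^S_i(F,\mathbb{K})_{i+j}$ is the homology at the middle term of
\[
\bigwedge^{i+1}V\otimes F_{j-1} \to \bigwedge^{i}V\otimes F_{j} \to \bigwedge^{i-1}V\otimes F_{j+1}.
\]
Next, tensor the exterior-power sequence (with $i$ replaced by $i-1$) by $\mathcal{F}(j)$ and take global sections. This shows
\[
H^0\Bigl(\bigwedge^{i}\mathcal{M}\otimes\mathcal{F}(j)\Bigr) = \ker\Bigl(\bigwedge^{i}V\otimes F_j \to H^0\bigl(\bigwedge^{i-1}\mathcal{M}\otimes\mathcal{F}(j+1)\bigr)\Bigr).
\]
Iterating, $H^0(\bigwedge^{i-1}\mathcal{M}\otimes\mathcal{F}(j+1))$ sits inside $\bigwedge^{i-1}V\otimes F_{j+1}$, and left-exactness of $H^0$ is all that is used here. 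Hence this kernel equals the kernel of the Koszul differential $\bigwedge^{i}V\otimes F_j \to \bigwedge^{i-1}V\otimes F_{j+1}$. One must check that the composite maps agree with the Koszul differentials; this holds because the Euler map $V\otimes\mathcal{O}\to\mathcal{O}(1)$ is evaluation, so the induced map $\bigwedge^{i}V\otimes\mathcal{O}\to\bigwedge^{i-1}V\otimes\mathcal{O}(1)$ is contraction. Under this identification $\alpha$ is the Koszul differential from $\bigwedge^{i+1}V\otimes F_{j-1}$, so $\operatorname{coker}\alpha$ is the Koszul homology, which is the desired $\mathrm{Tor}$.

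The only real care needed is this compatibility of the sheaf-theoretic maps with the Koszul differentials, together with the edge case $i=0$ (where $\bigwedge^0\mathcal{M}=\mathcal{O}$). Beyond that the argument is bookkeeping. No hypothesis on $\mathcal{F}$ beyond coherence is needed, since the sheaves $\bigwedge^k\mathcal{M}$ are locally free and $F$ is by definition the module of twisted global sections.
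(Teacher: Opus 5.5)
Your argument is correct. It is the standard derivation: exterior powers of the Euler sequence, then the long exact cohomology sequence, then the identification of $\operatorname{coker}\alpha$ with Koszul homology via the contraction maps. The paper gives no proof of its own and simply cites \cite[Theorem 5.8]{E}, which is proved along exactly these lines.
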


\begin{proof}
See \cite[Theorem 5.8]{E}.
\end{proof}

\begin{lemma}\label{lem:3regular}
Let $X \subset \mathbb{P}^r$ be a $3$-regular variety, that is, $H^i (\mathbb{P}^r,\mathcal{I}_X(i-3))=0$ for all $i \geq 1$. 
Then for every positive integer $p \le \operatorname{codim}(X,\mathbb{P}^r)$, $X$ satisfies property $N_{2,p}$ if and only if
\[
H^1(\mathbb{P}^r,\bigwedge^p \mathcal{M} \otimes \mathcal{I}_X(2))=0.
\]
\end{lemma}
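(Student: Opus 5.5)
The plan is to read property $N_{2,p}$ off the degree-$(p+2)$ graded Betti numbers of the ideal, and then convert these into Koszul cohomology of $\mathcal{I}_X$ using Lemma~\ref{lem:exactsequence}. Let $I=\bigoplus_j H^0(\mathbb{P}^r,\mathcal{I}_X(j))$ be the saturated ideal. Then $\mathrm{Tor}^S_{i}(S/I,\mathbb{K})_{i+j+1}=\mathrm{Tor}^S_{i-1}(I,\mathbb{K})_{i+j+1}$ for $i\ge1$. Because $X$ is $3$-regular, $I$ is $3$-regular, so its minimal resolution has generators only in degrees $\le 3$ and the $k$-th syzygies only in degrees $k+2$ and $k+3$. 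Also, $X$ is a variety, so $I_1=0$ and there are no linear forms. Hence $X$ satisfies $N_{2,p}$ if and only if
\[
\mathrm{Tor}^S_{k}(I,\mathbb{K})_{k+3}=0 \quad\text{for } 0\le k\le p-1 .
\]

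Next apply Lemma~\ref{lem:exactsequence} with $\mathcal{F}=\mathcal{I}_X$, $i=k$ and $j=3$. This gives an injection
\[
\mathrm{Tor}^S_k(I,\mathbb{K})_{k+3}\hookrightarrow H^1(\bigwedge^{k+1}\mathcal{M}\otimes\mathcal{I}_X(2)),
\]
whose cokernel maps into $\bigwedge^{k+1}V\otimes H^1(\mathcal{I}_X(2))$. By $3$-regularity, $H^1(\mathcal{I}_X(2))$ vanishes, so the injection is an isomorphism. (If one prefers to avoid reading the lemma's sequence directly, the same follows from the Koszul sequence $0\to\bigwedge^{k+1}\mathcal{M}\to\bigwedge^{k+1}V\otimes\mathcal{O}\to\bigwedge^{k}\mathcal{M}(1)\to0$ tensored with $\mathcal{I}_X(2)$.) The upshot is
\[
N_{2,p}\iff H^1(\bigwedge^{k}\mathcal{M}\otimes\mathcal{I}_X(2))=0 \text{ for } 1\le k\le p .
\]
Note the index shift: the condition on $\mathrm{Tor}_{k-1}$ becomes the cohomology of $\bigwedge^{k}\mathcal{M}$.

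The main step is to show that vanishing for $k=p$ forces vanishing for all smaller $k$. I would handle this with the Koszul sequence
\[
0\to\bigwedge^{k}\mathcal{M}\otimes\mathcal{I}_X(2)\to\bigwedge^{k}V\otimes\mathcal{I}_X(2)\to\bigwedge^{k-1}\mathcal{M}\otimes\mathcal{I}_X(3)\to0 .
\]
Its long exact sequence shows that $H^1(\bigwedge^{k}\mathcal{M}\otimes\mathcal{I}_X(2))$ is the cokernel of $\bigwedge^kV\otimes I_2\to H^0(\bigwedge^{k-1}\mathcal{M}\otimes\mathcal{I}_X(3))$. The difficulty is comparing these cokernels as $k$ varies. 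The cleanest route is algebraic. If $\mathrm{Tor}_{k-1}(I)_{k+2}\ne0$ for some $k<p$, let $k$ be the least such index; then the lowest nonlinear syzygy propagates, because for a $3$-regular module the "cubic strand" $\bigoplus_k \mathrm{Tor}_k(I)_{k+3}$ is the Koszul homology of a module generated in a single degree. So once it is nonzero in homological degree $k$, it stays nonzero up to degree $\operatorname{codim}$. This is the standard fact that a $2$-regular truncation of the quotient has nonvanishing Betti numbers $b_{k}$ for all $k\le$ projective dimension. The hypothesis $p\le\operatorname{codim}(X,\mathbb{P}^r)$ is used exactly here, since the projective dimension of $I_{\ge3}$-part is at least $\operatorname{codim}-1$ by the Auslander--Buchsbaum formula and Cohen–Macaulayness considerations. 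I expect this monotonicity step to be the main obstacle. If the propagation argument is delicate, I would instead cite the known fact (Eisenbud's book, or Ahn--Kwak) that for $3$-regular $X$ the Betti numbers $\beta_{k,2}$ are nonzero for consecutive $k$ starting from the first nonzero one, up to the codimension.
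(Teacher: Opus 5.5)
Your first reduction is correct. Since $H^1(\mathcal{I}_X(2))=0$, Lemma~\ref{lem:exactsequence} gives $\mathrm{Tor}^S_k(I,\mathbb{K})_{k+3}\cong H^1(\bigwedge^{k+1}\mathcal{M}\otimes\mathcal{I}_X(2))$. Moreover $3$-regularity leaves only the cubic row of the Betti table. So $N_{2,p}$ is equivalent to $H^1(\bigwedge^{k}\mathcal{M}\otimes\mathcal{I}_X(2))=0$ for all $1\le k\le p$. (Your remark that a variety has $I_1=0$ needs nondegeneracy, but you never use it.)

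The gap is the monotonicity step, which is the real content of the lemma; the paper itself only cites Green--Lazarsfeld and Park for it. The mechanism you propose is false. The cubic strand of $I$ is not the linear strand of a module generated in one degree. For $I_1=0$, the sequence $0\to I_{\ge3}\to I\to I_2\to0$ gives
\[
\mathrm{Tor}^S_k(I,\mathbb{K})_{k+3}\cong\mathrm{coker}\Bigl(\bigwedge^{k+1}V\otimes I_2\longrightarrow\mathrm{Tor}^S_k(I_{\ge3},\mathbb{K})_{k+3}\Bigr).
\]
This is only a quotient of the linear strand of $I_{\geq 3}$, so nonvanishing does not pass from one to the other. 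Also $H^1_{\mathfrak m}(I_{\ge3})\cong I_2\neq0$, so $I_{\ge3}$ has depth $1$ and projective dimension $r$, which has nothing to do with the codimension. Taken literally, your argument would make the cubic strand persist up to $r$. That already fails for an elliptic normal curve in $\mathbb{P}^{d-1}$, whose only nonzero cubic entry is $\beta_{d-2,2}(S/I)=1$. Auslander--Buchsbaum is not where the codimension enters, and an unspecified citation to Eisenbud or Ahn--Kwak does not close the gap.

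What is missing is duality below the codimension. Let $F_\bullet$ be the minimal free resolution of $S/I$, let $t_i$ be the largest degree of a generator of $F_i$, and let $c=\operatorname{codim}X=\operatorname{grade}I$.

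\begin{enumerate}
\item Since $\mathrm{Ext}^i_S(S/I,S)=0$ for $i<c$, the dual complex $F_0^*\to F_1^*\to\cdots$ is exact at $F_i^*$ for $i<c$.
\item Take a basis element of $F_i^*$ of degree $-t_i$. It is not a cycle: otherwise it would be a boundary, hence lie in $\mathfrak m F_i^*$ by minimality. For $i=0$, use injectivity of $F_0^*\to F_1^*$ instead.
\item So its image is a nonzero element of $\mathfrak m F_{i+1}^*$ of degree $-t_i$. This forces $t_{i+1}\ge t_i+1$ for $0\le i<c$.
\item Since $3$-regularity gives $t_i\le i+2$, if $\beta_{k,2}(S/I)\neq0$ with $k<c$, then $t_k=k+2$. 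Hence $t_{k+1}=k+3$, i.e.\ $\beta_{k+1,2}(S/I)\neq0$.
\item By induction, $\beta_{p,2}=0$ with $p\le c$ forces $\beta_{k,2}=0$ for all $k\le p$.
\end{enumerate}

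Combined with your reduction, this proves the lemma. The bound $p\le c$ is genuinely needed: a cubic hypersurface is $3$-regular with $\beta_{2,2}=0$, yet it fails $N_{2,1}$.
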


\begin{proof}
See \cite[Lemma 1.4]{GL} and \cite[Lemma 2.1]{P07}.
\end{proof}

\subsection{Upper Semicontinuity of the Betti Numbers}
In this subsection, we establish an upper semicontinuity result for certain graded Betti numbers arising from families of projective schemes. Let us first fix some notations.

\begin{notation}\label{not:secant varieties}
Let $X \subset \mathbb{P}^r$ be a nondegenerate smooth projective variety.
\begin{enumerate}[leftmargin=7mm, labelsep=1.5mm, itemsep=0pt, topsep=2pt, font=\upshape]
	\item For each $k \geq 2$, let $X^k$ denote the $k$-\textit{th secant variety} of $X$, i.e., the closure of the set of points lying in the $(k-1)$-dimensional linear subspaces spanned by general collections of $k$ points of $X$.
	\item The number ${\rm ord} (X) = \min \{ k ~|~ X^k = \mathbb{P}^r \}$ is called the \textit{order} of $X$.
	\item Set $U_k = X^k \setminus X^{k-1}$ and $U = \mathbb{P}^r \setminus X^2$.
	\item For $q \in U$, let $X_q \subset \mathbb{P}^{r-1}$ denote the projection of $X$ from $q$.
	\item For each $3 \leq k \leq {\rm ord} (X)$, define
	\begin{equation*}
		m_k (X) \colonequals \max \{ {\rm index} (X_q ) ~|~ q \in U_k \}.
	\end{equation*}
\end{enumerate}
\end{notation}

\begin{proposition}\label{prop:upper semicontinuity}
Let $X \subset \mathbb{P}^r$ be as in Notation~\ref{not:secant varieties}. Assume that $3 \le k \le {\rm ord}(X)$ and $X_q$ is $3$-regular for every $q \in U_k$. If $q$ is a general point in $U_k$, then
\begin{equation*}
{\rm index} (X_q ) = m_k (X).
\end{equation*}
\end{proposition}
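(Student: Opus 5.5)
The plan is to show that the locus $\{q \in U_k : \mathrm{index}(X_q) \ge m\}$ is Zariski open in $U_k$ for each $m$, using the cohomological criterion of Lemma~\ref{lem:3regular} in a family together with semicontinuity. Since $U_k = X^k \setminus X^{k-1}$ is an open subset of the irreducible variety $X^k$, it is irreducible. Once openness is known, the proof finishes quickly. Choose $q_0 \in U_k$ with $\mathrm{index}(X_{q_0}) = m_k(X)$; the maximum exists because the index is bounded by the codimension. Then the set where $\mathrm{index} \ge m_k(X)$ is a nonempty open subset of $U_k$, and by the definition of $m_k(X)$ the index equals $m_k(X)$ on that set. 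Hence $\mathrm{index}(X_q) = m_k(X)$ for general $q \in U_k$.

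To set up the family, I work locally on $U_k$ to avoid monodromy issues with the target projective space. Over a small open set $W \subset U_k$, I choose projections $\pi_q\colon \mathbb{P}^r \dashrightarrow \mathbb{P}^{r-1}$ that vary algebraically with $q$. One way is to fix a hyperplane $H \not\ni q$ for all $q \in W$ and project onto $H \cong \mathbb{P}^{r-1}$. Let $\mathcal{X} \subset W \times \mathbb{P}^{r-1}$ be the image of $W \times X$ under $(q,x) \mapsto (q,\pi_q(x))$. Since $q \notin X^2$, each $\pi_q|_X$ is an isomorphism onto $X_q$. So $\mathcal{X} \to W$ is flat, with fibers $X_q$ all isomorphic to $X$ and embedded by subsystems of $|\mathcal{O}_X(1)|$ of the same dimension. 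The ideal sheaf $\mathcal{I}_{\mathcal{X}}$ is then flat over $W$ and restricts to $\mathcal{I}_{X_q}$ on each fiber. The codimension $c = r-1-\dim X$ is constant.

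The key step is to fix $p$ with $1 \le p \le c$ and consider the sheaf $\bigwedge^p \mathcal{M} \otimes \mathcal{I}_{\mathcal{X}}(2)$ on $W \times \mathbb{P}^{r-1}$, where $\mathcal{M} = \Omega_{\mathbb{P}^{r-1}}(1)$ is pulled back. This sheaf is flat over $W$. By the semicontinuity theorem, the function $q \mapsto h^1(\mathbb{P}^{r-1}, \bigwedge^p \mathcal{M} \otimes \mathcal{I}_{X_q}(2))$ is upper semicontinuous. Hence its vanishing locus is open. The hypothesis that every $X_q$ with $q \in U_k$ is $3$-regular is exactly what allows Lemma~\ref{lem:3regular} to apply at every point of $W$. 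By that lemma, the vanishing locus is precisely $\{q : X_q \text{ satisfies } N_{2,p}\}$. Property $N_{2,p}$ implies $N_{2,p'}$ for $p' \le p$, so $\{\mathrm{index} \ge m\}$ equals the $N_{2,m}$-locus, which is open in $W$. Covering $U_k$ by such sets $W$ gives openness on all of $U_k$. The case $m = 0$ is trivial.

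I expect the main technical point to be justifying flatness of $\mathcal{I}_{\mathcal{X}}$ and the base change, that is, ensuring that the fiber of $\mathcal{I}_{\mathcal{X}}$ at $q$ really is $\mathcal{I}_{X_q}$. I would handle this as follows. First, $\mathcal{O}_{\mathcal{X}}$ is flat because $\mathcal{X} \cong W \times X$ over $W$. Second, the sequence $0 \to \mathcal{I}_{\mathcal{X}} \to \mathcal{O}_{W\times\mathbb{P}^{r-1}} \to \mathcal{O}_{\mathcal{X}} \to 0$ then has flat middle and right terms. Together these give flatness of the kernel and the required compatibility with restriction to fibers.
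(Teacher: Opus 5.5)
Your proof is correct, but it takes a different route from the paper's. The paper never builds a family of the projected varieties $X_q\subset\mathbb{P}^{r-1}$. It replaces $X_q$ by the cone ${\rm join}(q,X)\subset\mathbb{P}^r$, which has the same graded Betti numbers, so all fibres sit in one fixed $\mathbb{P}^r$. This gives a single global family $M\subset U_k\times\mathbb{P}^r$ with no auxiliary choices. Flatness of $M$ comes from constancy of Hilbert polynomials \cite[Theorem III.9.9]{Hart}. Lemma~\ref{lem:exactsequence}, together with $H^1(\mathcal{I}_{M_q}(2))=0$, then identifies each $\beta_{i,2}$ separately with an $h^1$, which is upper semicontinuous.

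You instead trivialize the target locally, projecting onto a fixed hyperplane $H$ over $W=U_k\setminus H$. This makes flatness and base change of $\mathcal{I}_{\mathcal{X}}$ nearly free, since the family is $W\times X$ itself. For the cones, by contrast, one must check that the Hilbert polynomial is constant, and it depends on the full Hilbert function of $X_q$. You then use Lemma~\ref{lem:3regular} rather than tracking each Betti number, and glue openness over the cover.

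The one step you assert without proof is that $f\colon W\times X\to W\times H$, $(q,x)\mapsto(q,\pi_q(x))$, is a closed immersion. You need this so that $\mathcal{X}\cong W\times X$ with scheme-theoretic fibres $X_q$. It holds for the following reasons:
\begin{enumerate}
\item $f$ is proper and injective, hence finite.
\item Because $f$ is finite, the restriction to each fibre of the cokernel of $\mathcal{O}_{W\times H}\to f_*\mathcal{O}_{W\times X}$ is the cokernel of $\mathcal{O}_H\to (f_q)_*\mathcal{O}_X$.
\item That fibrewise cokernel is zero because each $f_q=\pi_q|_X$ is a closed immersion.
\item By Nakayama the cokernel vanishes, so $f$ is a closed immersion.
\end{enumerate}

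You also spell out the endgame that the paper leaves implicit: irreducibility of $U_k$, existence of the maximum $m_k(X)$, and openness of $\{{\rm index}\ge m\}$. Your claim that the index is bounded by the codimension $c$ needs a line of justification. It follows from \cite[Theorem 1.1]{EGHP}: property $N_{2,c}$ would force $\deg X_q\le c+1$, whereas $\deg X_q=\deg X\ge c+2$.
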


\begin{proof}
When $k \ge 3$ and $q \in U_k$, the two varieties $X_q \subset \mathbb{P}^{r-1}$ and ${\rm join}(q,X) \subset \mathbb{P}^r$ have the same graded Betti numbers. Thus, it suffices to prove that
\begin{equation*}
	{\rm index}({\rm join}(q,X)) = m_k (X)
\end{equation*}
for a general point $q$ in $U_k$.

To construct a family over $U_k$ whose fiber over $q \in U_k$ is ${\rm join}(q,X)$, first consider an incidence subvariety
\begin{equation*}
\widetilde{M} \colonequals \{(x,y,z) \in U_k \times \mathbb{P}^r \times X \mid y \in \langle x,z\rangle \} \subset U_k \times \mathbb{P}^r \times X,
\end{equation*}
which is a $\mathbb{P}^1$-bundle over $U_k \times X$.
Let
\begin{equation*}
\pi_{12} : U_k \times \mathbb{P}^r \times X \to U_k \times \mathbb{P}^r
\end{equation*}
be a projection. Since $\pi_{12}$ is proper over $U_k$, its image
\begin{equation*}
M \colonequals \pi_{12}(\widetilde{M})  = \{(x,y) \mid  \langle x,y \rangle \cap X \neq \emptyset \} \subset U_k \times \mathbb{P}^r
\end{equation*}
is projective over $U_k$ and $M_q = {\rm join}(q,X)$ for every $q \in U_k$.

Now, let $\mathcal{F} = \mathcal{I}_{M}$ be the ideal sheaf of $M \subset U_k \times \mathbb{P}^r$;
then for every $q \in U_k$,
\begin{equation*}
\mathcal{F}_q = \mathcal{I}_{M_q} = \mathcal{I}_{{\rm join}(q,X)}
\end{equation*}
on $\mathbb{P}^r$. 
Let $F_q = \oplus_{j \in \mathbb{Z}} H^0(\mathbb{P}^r, \mathcal{F}_q(j))$ be the associated graded $S$-module.
Since $X_q$ is $3$-regular and $q \not \in X^2$, $M_q$ is also $3$-regular. So
\[
H^1\!\left(\mathbb{P}^r, \mathcal{I}_{M_q}(2)\right)=0.
\]
Hence Lemma~\ref{lem:exactsequence} implies that
\[
\operatorname{Tor}^S_i(F_q,\mathbb{K})_{i+3}
\cong H^1\!\left( \mathbb{P}^r, \bigwedge^{i+1}\mathcal{M}_q \otimes \mathcal{I}_{M_q}(2) \right),
\]
where $\mathcal{M} (-1) = \Omega^1 _{U_k \times \mathbb{P}^r/U_k}$ is the sheaf of relative K\"ahler differentials on the projection $\pi \colon U_k \times \mathbb{P}^r \to U_k$; observe that $\mathcal{M}_q = \Omega_{\mathbb{P}^r} (1)$. Since $\pi$ is projective and $M$ in $U_k \times \mathbb{P}^r$ is flat over $U_k$ by \cite[Theorem III.9.9]{Hart}, applying \cite[Corollary III.12.8]{Hart} to $\bigwedge^{i+1}\mathcal{M} \otimes \mathcal{I}_{M}(2)$ implies that the graded Betti numbers
\[
\beta_{i,2}(X_q) = \beta_{i,2}(M_q) = \dim_{\mathbb{K}} \operatorname{Tor}^S_i(\mathcal{F}_q,\mathbb{K})_{i+3} = h^1\!\left( \mathbb{P}^r, \bigwedge^{i+1}\mathcal{M}_q
\otimes \mathcal{I}_{M_q}(2) \right)
\]
define an upper semicontinuous function of $q$ in $U_k$.
\end{proof}

\section{Projections of elliptic normal curves} \label{sec:proj_ell_normal_curves}
\noindent This section aims to prove the following theorem.

\begin{theorem}\label{thm:elliptic}
Let $C \subset \mathbb{P}^{d-1}$ be an elliptic normal curve of degree $d \geq 5$. 
Then for each integer $s \in \{3,\ldots, \lceil\frac{d}{2}\rceil \}$, there exists a point $q \in \mathbb{P}^{d-1}$ such that
\begin{equation*}
{\rm rk}_{C} (q ) = s \quad \mbox{and} \quad {\rm index} (C_q)=s-3.
\end{equation*}
\end{theorem}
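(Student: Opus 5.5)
Following the paper's hint, I will realize a suitable projection $C_q$ as a hyperplane section of an elliptic ruled surface scroll and read off the syzygies of $C_q$ from those of the scroll. The inequality ${\rm index}(C_q)\le s-3$ is Theorem~\ref{thm:upper bound of index}, so only the lower bound ${\rm index}(C_q)\ge s-3$ is needed, for one $q$ of secant rank exactly $s$.

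\textbf{Construction of the scroll.} Fix $s$ with $3\le s\le\lceil d/2\rceil$. Let $E$ be the elliptic curve underlying $C$, with $\mathcal{O}_C(1)=L$ of degree $d$. Choose a rank two vector bundle on $E$, say $\mathcal{E}=A\oplus B$ with line bundles of degrees $a=s$ and $b=d-s\ge a$. Consider the scroll
\[
S=\mathbb{P}(\mathcal{E})\subset\mathbb{P}^{d-1}
\]
embedded by $\mathcal{O}_{\mathbb{P}(\mathcal{E})}(1)$, whose $h^0$ equals $a+b=d$. Its hyperplane sections that are sections of the ruling, i.e.\ curves $C'\in|\mathcal{O}(1)|$ isomorphic to $E$, have degree $d$. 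Such a section spans a hyperplane $\mathbb{P}^{d-2}$ and is the image of $E$ under a $(d-1)$-dimensional subsystem of $|L'|$, with $L'=\mathcal{O}(1)|_{C'}$ of degree $d$. Hence $C'$ is an isomorphic projection $C_q$ of an elliptic normal curve from a point $q$. By choosing $A,B$ with $A\otimes B\cong L$, the curve $C$ is precisely $E$ embedded by $L$. The goal is to identify $q$ and show ${\rm rk}_C(q)=s$. The extension/quotient $\mathcal{E}\to A$ gives a unisecant curve of degree $a=s$ on $S$. Its span is a $\mathbb{P}^{s-1}$ meeting the hyperplane section; I expect that this span corresponds to an $(s-1)$-plane spanned by an $s$-point divisor $D\in|A|$ on $C$ containing $q$, so ${\rm rk}_C(q)\le s$. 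Genericity of the choices (and $s\le\lceil d/2\rceil$, i.e.\ $a\le b$) should give ${\rm rk}_C(q)=s$ exactly. A cleaner alternative is to apply Proposition~\ref{prop:upper semicontinuity} at the end: the set of $q$ of rank $s$ arising from the construction is dense in $C^s$.

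\textbf{Syzygies.} Since $C_q=S\cap H$ and $S$ is arithmetically Cohen--Macaulay (or at least $H^1(\mathcal{I}_S(j))=0$ in the relevant degrees) with a general hyperplane being a nonzerodivisor, the graded Betti numbers of $C_q$ agree with those of $S$ in the relevant range. So it suffices that $S$ satisfies $N_{2,s-3}$. Here I would invoke the known results on syzygies of elliptic ruled scrolls, namely that the index of $\mathbb{P}(A\oplus B)$ is governed by the smaller degree $\min(a,b)-3$. To verify this directly via Lemma~\ref{lem:3regular}, I would push $\bigwedge^p\mathcal{M}\otimes\mathcal{I}_S(2)$ down to $E$. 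The required vanishing reduces to $H^1$ vanishings of $\bigwedge^p M_L\otimes A^{\otimes 2}$-type bundles on $E$, where $M_L$ is the kernel bundle. These follow from the stability of $M_L$ and slope considerations, and this is where $p\le a-3$ enters.

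\textbf{Main obstacle.} The hard parts are two. First, checking that the scroll's syzygy bound really gives $N_{2,s-3}$, including 3-regularity, so that Lemma~\ref{lem:3regular} applies in the form needed. Second, matching the hyperplane section with $C_q$ while certifying that the rank of $q$ is exactly $s$, not smaller. For the first, I would try to cite the known theorem on elliptic scrolls. For the second, I would use that $q\notin C^{s-1}$ because otherwise Theorem~\ref{thm:upper bound of index} would contradict $N_{2,s-3}$. This means the equality ${\rm rk}_C(q)=s$ comes for free once ${\rm rk}_C(q)\le s$ and ${\rm index}(C_q)\ge s-3$ are established.
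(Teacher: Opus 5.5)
Your strategy is the paper's, and for $s\le\lfloor d/2\rfloor$ it works. Normalizing $A\oplus B$ by $B^{-1}$ gives exactly the paper's scroll with $e=d-2s$, $\mathfrak{b}=B$ and $\mathfrak{b}^2\otimes\mathfrak{e}=A\otimes B=L$. So the index is $b-e-3=s-3$ by Notation~and~Remarks~\ref{elliptic ruled surface}(2), the section of degree $s$ gives ${\rm rk}_C(q)\le s$, and Theorem~\ref{thm:upper bound of index} forces equality, as you say.

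\textbf{The gap.} The top case $d$ odd, $s=\lceil d/2\rceil=(d+1)/2$ fails. There $d-s=s-1<s$, so your assertion $b=d-s\ge a$ is false, and so is the ``i.e.'' in ``$s\le\lceil d/2\rceil$, i.e.\ $a\le b$''. No split bundle can work in this case. The section of $\mathbb{P}(A\oplus B)$ corresponding to the quotient onto the summand of smaller degree (here $s-1$) spans a $\mathbb{P}^{s-2}$. It meets every hyperplane in $s-1$ points spanning a $\mathbb{P}^{s-3}$, so every hyperplane section has an $(s-1)$-secant $(s-3)$-plane. Hence ${\rm rk}_C(q)\le s-1$ and ${\rm index}(C_q)\le s-4$. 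Equivalently, split bundles have $e=|a-b|\ge 0$, so their index is $\min(a,b)-3\le\lfloor d/2\rfloor-3$. This is precisely the case needed for Theorem~\ref{thm:main}(3) when $d$ is odd.

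\textbf{The fix.} You need a non-split bundle. Take $\mathcal{E}$ normalized with $e=d-2s=-1$, i.e.\ the indecomposable stable rank-two bundle of odd degree. Equivalently, replace $A\oplus B$ by the unique indecomposable rank-two bundle $\mathcal{F}$ with $\det\mathcal{F}\cong L$, whose maximal line subbundles have degree $(d-1)/2$. Take $\mathfrak{b}$ of degree $(d-1)/2$ with $\mathfrak{b}^2\otimes\mathfrak{e}\cong L$; it exists because ${\rm Pic}^0(C)$ is $2$-divisible. Then $b\ge e+3$ exactly when $d\ge 5$, the index is $b-e-3=s-3$, and the minimal section $C_0$ has $L$-degree $b-e=s$. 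The rest of your argument goes through verbatim.

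\textbf{Two smaller points.} First, $S$ is not arithmetically Cohen--Macaulay, since $H^1(\mathcal{O}_S)\cong H^1(\mathcal{O}_C)\neq 0$. What you need, and what your parenthetical fallback provides, is projective normality. That makes a general linear form a nonzerodivisor on the coordinate ring of $S$, with quotient the coordinate ring of the hyperplane section.

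Second, your ``I expect'' step on the rank is routine:
\begin{enumerate}[leftmargin=7mm, labelsep=1.5mm, itemsep=0pt, topsep=2pt]
\item $C_0$ is embedded by the complete linear series of $L|_{C_0}$, because $H^1(C,\mathfrak{b})=0$. Hence it spans a $\mathbb{P}^{s-1}$.
\item Its $s$ points on a general hyperplane span a $\mathbb{P}^{s-2}$.
\item Their preimages on $C$ lie in the cone $\langle q,\mathbb{P}^{s-2}\rangle\cong\mathbb{P}^{s-1}$.
\item Being $s<d$ points of an elliptic normal curve, these preimages span that whole $\mathbb{P}^{s-1}$, which therefore contains $q$.
\end{enumerate}
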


We first prove Theorem~\ref{thm:upper bound of index}, which holds for curves of arbitrary genus, before specializing to the elliptic curves in the proof of Theorem~\ref{thm:elliptic}.

\begin{proof}[{\bf Proof of Theorem~\ref{thm:upper bound of index}}]
Let $s$ be any fixed integer in $\{3,\ldots, \lceil\frac{d-g+1}{2}\rceil \}$. 
First, suppose that $q$ is a general point in $C^s$. 
Then, there exist points $q_1 , \ldots , q_s \in C$ such that
\begin{equation*}
\Lambda \colonequals \langle q_1 , \ldots , q_s \rangle
\end{equation*}
is an $(s-1)$-plane containing $q$. 
Hence $\pi_q (\Lambda)$ is an $s$-secant $(s-2)$-plane to $C_q$. By \cite[Theorem 1.1]{EGHP}, the curve
\begin{equation*}
C_q \subset \mathbb{P}^{d-g-1}
\end{equation*}
fails to satisfy property $N_{2,s-2}$. 
This implies that ${\rm index}(C_q) \le s-3$ for a general point $q \in C^s$. 
Since $d \ge 2g+3$, $C_q$ is $3$-regular for every $q \in \mathbb{P}^{d-g} \setminus C^2$ by \cite[Theorem 1]{Noma}.
Hence Proposition~\ref{prop:upper semicontinuity} implies that ${\rm index} (C_q) \leq s-3 = \mathrm{rk}_C (q)-3$ for every $q \in C^s \setminus C^{s-1}$.
\end{proof}

To prove Theorem~\ref{thm:elliptic}, we investigate the minimal free resolutions of unisecant divisors on ruled surface scrolls.

\begin{notation and remarks}\label{elliptic ruled surface}
Let $C$ be an elliptic curve, and let $\mathcal{E}$ be a rank two vector bundle on $C$. Let $S = \mathbb{P}_C (\mathcal{E})$ be the associated ruled surface with projection morphism $\pi : S \rightarrow C$. We follow the notation and terminology of \cite[Chapter V, \S~2]{Hart}.
\begin{enumerate}[leftmargin=7mm, labelsep=1.5mm, itemsep=0pt, topsep=2pt, font=\upshape]
	\item We assume that $\mathcal{E}$ is normalized in the sense that $H^0 (C,\mathcal{E})\neq 0$ while $H^0 (C,\mathcal{E} \otimes \mathcal{O}_C (D))=0$ for every divisor $D$ on $C$ of negative degree. We set
	\begin{equation*}
		\mathfrak{e}=\wedge^2 \mathcal{E}~~~~\mbox{and}~~~~e = -\deg (\mathfrak{e}).
	\end{equation*}
	The invariant $e$ of $S$ satisfies $e \geq -1$, and $e=-1$ if and only if $\mathcal{E}$ is stable. We fix a minimal section $C_0$ such that $\mathcal{O}_S (C_0)$ is the tautological line bundle of $S$. For $\mathfrak{b} \in \mbox{Pic} (C)$, we denote by $\mathfrak{b}\mathfrak{f}$ the pullback $\pi^*\mathfrak{b}$. Thus every line bundle on $S$ can be written uniquely as $\mathcal{O}_S (aC_0+\mathfrak{b}\mathfrak{f})$ for some $a\in \mathbb{Z}$ and $\mathfrak{b} \in \mbox{Pic} (C)$. Also, every element of $\mbox{Num}(S)$ can be written $aC_0 +b\mathfrak{f}$ with $a,b \in \mathbb{Z}$.
	
	\item Let $L= \mathcal{O}_S (C_0 + \mathfrak{b} \mathfrak{f})$ be a line bundle on $S$ with $\mbox{deg}(\mathfrak{b})=b \geq e+3$. Then $L$ is a very ample line bundle, defining a $3$-regular projectively normal embedding (cf. \cite{P06})
	\begin{equation*}
		S \subset \mathbb{P}^r , ~ r= 2b-e-1.
	\end{equation*}
	Furthermore, \cite[Theorem 1.4]{P06} shows that $S \subset \mathbb{P}^r$ satisfies property $N_{2,p}$ if and only if $b \geq e+3+p$.
\end{enumerate}
\end{notation and remarks}

\begin{lemma}\label{lem:construction}
Let $S \subset \mathbb{P}^r$ be as in Notation~and~Remarks~\ref{elliptic ruled surface}.$(2)$, and let $X \subset \mathbb{P}^{r-1}$ be a general hyperplane section of $S$. Then
\begin{enumerate}[leftmargin=7mm, labelsep=1.5mm, itemsep=0pt, topsep=2pt, font=\upshape]
	\item ${\rm index}(X) = b-e-3$.
	\item $X$ is equal to $C_q \colonequals \pi_q (C)$, where
	\begin{equation*}
		C \subset \mathbb{P}^r
	\end{equation*}
	is the elliptic normal curve embedded by the line bundle $\mathfrak{b}^2 \otimes \mathfrak{e}$ and $q$ is a point in $\mathbb{P}^r$ with $\mbox{\rm rk}_{C} (q) = b-e$.
\end{enumerate}
\end{lemma}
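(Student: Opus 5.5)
Both parts rest on the fact that a general hyperplane section $X = S \cap H$ is a smooth curve in the linear system $|C_0+\mathfrak{b}\mathfrak{f}|$. Since $X\cdot\mathfrak{f}=1$, $\pi|_X\colon X \to C$ is an isomorphism, so $X$ is an elliptic curve. Its degree in $\mathbb{P}^{r-1}$ is
\[
(C_0+b\mathfrak{f})^2 = -e+2b = r+1 .
\]
Thus $X \subset \mathbb{P}^{r-1}$ is an elliptic curve of degree $r+1$ that is not linearly normal.

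For part (1), I would compare the syzygies of $X$ with those of $S$. Since $S$ is projectively normal and $3$-regular, the standard hyperplane-section argument applies. The exact sequence $0\to \mathcal{I}_S(j-1)\to\mathcal{I}_S(j)\to\mathcal{I}_{X/H}(j)\to 0$, together with $H^1(\mathcal{I}_S(j))=0$ for all $j$, shows that the homogeneous coordinate ring of $X$ is $R_S/(h)$ with $h$ a nonzerodivisor. Hence the graded Betti numbers of $X\subset H$ and of $S\subset\mathbb{P}^r$ agree. Then ${\rm index}(X)={\rm index}(S)$, and by \cite[Theorem 1.4]{P06} this equals $\max\{p : b\ge e+3+p\} = b-e-3$. (If $b=e+3$ the index is $0$, consistent with the convention.)

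For part (2), put $\mathcal{L}=L|_X$. Identifying $X$ with $C$ via $\pi$, I would check that $\mathcal{L}\cong \mathfrak{b}^2\otimes\mathfrak{e}$. One way is adjunction: $K_S \equiv -2C_0+(\mathfrak{e})\mathfrak{f}$, and $K_X=0$ gives $(K_S+X)|_X$ trivial. Alternatively, compute $\mathcal{O}_X(X)$ and $\mathcal{O}_X(C_0)$ through the section $X\to S$, which corresponds to a quotient $\mathcal{E}\otimes\mathfrak{b}\to \mathcal{L}$ whose kernel is $\mathfrak{b}^{-1}\otimes$(stuff). The determinant then forces $\mathcal{L}=\det(\mathcal{E}\otimes\mathfrak{b})\otimes(\text{kernel})^{-1}$, and I expect this to reduce to $\mathfrak{b}^2\otimes\mathfrak{e}$. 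The degree check $2b-e=r+1$ is consistent with this. So $\mathcal{L}$ has degree $r+1$, $h^0(\mathcal{L})=r+1$, and $H^0(S,L)\to H^0(X,\mathcal{L})$ has image the hyperplane of sections cut by $H$. It follows that $X\subset \mathbb{P}^{r-1}$ is the image of the elliptic normal curve $C\subset\mathbb{P}^r=\mathbb{P}H^0(\mathcal{L})^*$ under projection from a point $q$, and $q\notin C^2$ because $X$ is smooth of the same degree.

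The main obstacle is the secant rank $\mathrm{rk}_C(q)=b-e$. For the upper bound, write $s_0 = b-e$. I would locate $q$ via the divisors on $X$ cut by curves in $|C_0+\mathfrak{b}'\mathfrak{f}|$-type subsystems, more precisely via sections of $S$. If $D\subset X$ is a divisor of degree $k$ whose span in $\mathbb{P}^{r-1}$ has dimension $k-2$ rather than $k-1$, then $q$ lies in the $(k-1)$-plane spanned by $D$ in $C\subset\mathbb{P}^r$. Degenerate divisors $D$ arise from $X\cap C'$, where $C'$ is another section of $S$. Taking $C'$ a section of minimal class $C_0$ (or an appropriate section with $C'^2$ small), I get $X\cdot C' = b-e$ points spanning only a line-bundle-sized space: their span on $C'\subset S\subset\mathbb{P}^r$ is of dimension $h^0(L|_{C'})-1$. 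This gives $q\in C^{b-e}$.

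For the lower bound $\mathrm{rk}_C(q)\ge b-e$, I would avoid a direct geometric argument and instead use Theorem~\ref{thm:upper bound of index} together with part (1): since $b-e-3 = {\rm index}(X) = {\rm index}(C_q)\le \mathrm{rk}_C(q)-3$, we get $\mathrm{rk}_C(q)\ge b-e$. This makes the genuinely hard step only the containment $q\in C^{b-e}$, that is, exhibiting a $(b-e)$-secant $(b-e-2)$-plane of $X$. I expect that identifying the right section $C'$ (for $e=-1$, a section in a class with self-intersection $1$ rather than a fixed $C_0$) and verifying the span dimension via $h^0(L|_{C'})$ is the delicate computation.
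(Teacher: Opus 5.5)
Your proposal is correct and is essentially the paper's proof. Part (1) comes from the invariance of graded Betti numbers under a general hyperplane section of the projectively normal surface $S$. The bound $\mathrm{rk}_C(q)\le b-e$ comes from the $b-e$ points $\Gamma=C_0\cap X$. The reverse bound comes from Theorem~\ref{thm:upper bound of index} combined with (1).

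The only local difference is how you identify $L|_X$. You use adjunction. This needs $\omega_S\cong\mathcal{O}_S(-2C_0)\otimes\pi^*\mathfrak{e}$ as line bundles, not just numerically. Triviality of $(K_S+X)|_X=\bigl(\mathcal{O}_S(-C_0)\otimes\pi^*(\mathfrak{e}\otimes\mathfrak{b})\bigr)|_X$ then gives $\mathcal{O}_X(C_0)\cong\mathfrak{e}\otimes\mathfrak{b}$, hence $L|_X\cong\mathfrak{b}^2\otimes\mathfrak{e}$. The paper instead pushes $0\to\mathcal{O}_S\to L\to L|_X\to 0$ forward to $0\to\mathcal{O}_C\to\mathcal{E}\otimes\mathfrak{b}\to L|_X\to 0$ and takes determinants. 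That route avoids the canonical class formula. It also makes your vaguer alternative precise: the kernel of $\mathcal{E}\otimes\mathfrak{b}\to L|_X$ is exactly $\mathcal{O}_C$, spanned by the section of $L$ cutting out $X$.

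The step you expected to be delicate is immediate and needs no other section. $C_0$ itself works for every $e\ge -1$; for $e=-1$ it already has $C_0^2=1$. Since $h^0(L|_{C_0})=b-e$, the curve $C_0$ spans at most a $(b-e-1)$-plane. Its hyperplane section $\Gamma=C_0\cap H$ therefore spans at most a $(b-e-2)$-plane.

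As you wrote it, the dimension $h^0(L|_{C'})-1$ is that of the span of $C_0$, not of the $b-e$ points. It is the drop by one from cutting with $H$ that makes $\Gamma$ degenerate. Since $\Gamma$ spans a full $(b-e-1)$-plane on the elliptic normal curve $C\subset\mathbb{P}^r$, this drop forces $q$ into that plane.
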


\begin{proof}
$(1)$ Since $S$ is projectively normal and ${\rm index}(S) = b-e-3$ (cf. Notation~and~Remarks~\ref{elliptic ruled surface}.$(2)$), a general hyperplane section $X$ satisfies ${\rm index} (X) = b-e-3$.

\noindent $(2)$ Let $\mathcal{L}$ be the restriction of $L= \mathcal{O}_S (C_0 + \mathfrak{b} \mathfrak{f})$ to $X$. Note that $X \cong C$, and the degree of $X$ in $\mathbb{P}^{r-1}$ is equal to $r+1$, the degree of $S$. This implies that $X$ is equal to $C_q \colonequals \pi_q (C)$ for some $q \in \mathbb{P}^r \setminus C^2$, where $C \subset \mathbb{P}^r$ is the elliptic normal curve embedded by $\mathcal{L}$. Tensoring $L$ to the exact sequence
\begin{equation*}
0 \rightarrow \mathcal{O}_S (-X) \rightarrow \mathcal{O}_S \rightarrow \mathcal{O}_X \rightarrow 0,
\end{equation*}
we get
\begin{equation*}
0 \rightarrow \mathcal{O}_S \rightarrow L \rightarrow L|_X \rightarrow 0.
\end{equation*}
By pushing forward via $\pi$, we obtain
\begin{equation*}
0 \rightarrow \mathcal{O}_C \rightarrow \mathcal{E} \otimes \mathfrak{b}  \rightarrow L|_X \rightarrow 0.
\end{equation*}
This implies that $L|_X$ is equal to $\mathfrak{b}^2 \otimes \mathfrak{e}$, the determinant of $\mathcal{E} \otimes \mathfrak{b}$.

Finally, to prove the equality $\mbox{\rm rk}_{C} (q) = b-e$, we consider the scheme-theoretic intersection $\Gamma \colonequals C_0 \cap X$. Note that
\begin{equation*}
| \Gamma | = C_0 \cdot (C_0 +\mathfrak{b} \mathfrak{f} ) =  b-e.
\end{equation*}
Since the linear series $|L|$ restricts to a nondegenerate embedding of $C_0$ into $\mathbb{P}^{b-e-1}$, the hyperplane section $\Gamma$ of $C_0$ spans $\mathbb{P}^{b-e-2}$. Therefore, $\mbox{\rm rk}_{C} (q) \leq b-e$ and hence $q \in C^{b-e}$. On the other hand, if $q$ is contained in $C^{b-e-1}$, then ${\rm index} (X) < b-e-3$ by Theorem~\ref{thm:upper bound of index}, which contradicts $(1)$. Consequently, $q \notin C^{b-e-1}$, and hence $\mbox{\rm rk}_{C} (q) = b-e$.
\end{proof}

\begin{proof}[{\bf Proof of Theorem~\ref{thm:elliptic}}]
Let $S = \mathbb{P}_C (\mathcal{E})$ be a ruled surface over $C$ associated to a normalized rank two vector bundle $\mathcal{E}$ on $C$ with $\mathfrak{e}=\wedge^2 \mathcal{E}$ such that
\begin{equation*}
e = d-2s.
\end{equation*}
Indeed, $3 \leq s \leq \lceil \frac{d}{2}\rceil$ and hence $e \geq -1$. Thus, there does exist such a surface $S$. Now, let $\mathfrak{b} \in \mbox{Pic}(C)$ be a line bundle of degree $d-s$ such that
\begin{equation*}
\mathfrak{b}^2 = \mathcal{O}_C (1) \otimes \mathfrak{e}^{-1} .
\end{equation*}
Again, there exists such a line bundle $\mathfrak{b}$ since $\mbox{Pic}^{0} (C)$ is $2$-divisible. Moreover,
\begin{equation*}
b = d-s = e+s \geq e+3
\end{equation*}
and hence
\begin{equation*}
L \colonequals \mathcal{O}_S (C_0 + \mathfrak{b} \mathfrak{f})
\end{equation*}
is a very ample line bundle on $S$, defining a $3$-regular projectively normal embedding
\begin{equation*}
S \subset \mathbb{P}^{d-1},
\end{equation*}
whose Green--Lazarsfeld index is equal to $b -e-3 = s-3$. For details, see Notation~and~Remarks~\ref{elliptic ruled surface}.$(2)$. By Lemma~\ref{lem:construction}, there is a point $q$ in $C^{b-e}$ such that
\begin{equation*}
{\rm index}(C_q ) = b-e-3 = s-3.
\end{equation*}
This implies that $\mbox{\rm rk}_{C} (q) = s$.
Indeed, if $\mbox{\rm rk}_{C} (q) < s$, then ${\rm index}(C_q ) < s-3$ by Theorem~\ref{thm:upper bound of index}. This completes the proof.
\end{proof}

Now, we are ready to give a proof of Theorem~\ref{thm:main}.

\begin{proof}[{\bf Proof of Theorem~\ref{thm:main}}]
$(1)$ This follows immediately from Theorem \ref{thm:upper bound of index}.

\noindent $(2)$ By Theorem~\ref{thm:elliptic}, there exists a point $q \in \mathbb{P}^{d-1}$ such that
\begin{equation*}
{\rm rk}_{C} (q ) = s \quad \mbox{and} \quad {\rm index} (C_q)=s-3.
\end{equation*}
Thus the assertion comes by combining $(1)$ and Proposition~\ref{prop:upper semicontinuity}.

\noindent $(3)$ This comes immediately by applying $(2)$ to the case where
\begin{equation*}
s = {\rm ord} ( C) = \left \lceil  \frac{d}{2} \right \rceil.
\end{equation*}
\end{proof}

\section{Remarks on projection of higher dimensional varieties} \label{sec:rmk_proj_higher_dim_vars}

\noindent Let $X \subset \mathbb{P}^r$ be a linearly normal smooth projective variety of dimension $n$, and let $q$ be a point in $\mathbb{P}^r \setminus X^2$.

If $X$ is a curve, then Theorem 1.1 in \cite{P07}, Theorem 1.1 in \cite{LP13}, and Theorem~\ref{thm:main} in the present paper show that as the position of $q$ relative to $X$ becomes more general, the Green--Lazarsfeld index of $X_q$ increases.
It is therefore natural to ask whether this phenomenon also persists when $n \geq 2$. However, the following example shows that this cannot be expected to hold for all $X$.

\begin{example}\label{ex:projection of scroll}
	Let $X = S(1,a_2,\ldots,a_n ) \subset \mathbb{P}^r$ be an $n$-dimensional smooth rational normal scroll, where $1=a_1 \leq a_2 \leq \cdots \leq a_n$ is a nondecreasing sequence of integers. Also, let $q$ be a point in $\mathbb{P}^r \setminus X^2$.
	
	By Theorem 1.4 in \cite{BS07} and Theorem 1.1 in \cite{P07b}, there exists a smooth $(n+1)$-fold rational normal scroll $Y = S(b_1 , \ldots , b_n , b_{n+1} )$ in $\mathbb{P}^{r-1}$ containing $X_q$. Theorems 1.2 and 1.3 in \cite{P07b} show that $b_1 =1$ and the Green--Lazarsfeld index of $X_q$ is $0$.
	
	This shows that even though ${\rm index}(X) = \infty$ and ${\rm rk}_X (q)$ can be arbitrarily large, one necessarily has ${\rm index}(X_q ) =0$ for all $q \in \mathbb{P}^r \setminus X^2$.
\end{example}

From this example, we see that the phenomenon observed for curves cannot, in general, be expected for higher dimensional projective varieties. 
Note that this example is special because $X$ itself is a variety of minimal degree. On the other hand, there are also positive results.

\begin{example}\label{ex:Veronese cubic surface}
	Let $S = \nu_3 (\mathbb{P}^2 ) \subset \mathbb{P}^9$ be the Veronese cubic surface. 
	Then $S^4 = \mathbb{P}^9$ and hence ${\rm ord} (S) = 4$. In particular, if $q$ is a general point of $\mathbb{P}^9$, then $q$ is contained in a $3$-plane spanned by $4$ points of $S$. 
	This implies that the projected surface $S_q$ admits a $4$-secant $2$-plane, and hence
	\[
		\mathrm{index}(S_q) \leq 1
	\]
	(cf. \cite[Theorem 1.1]{EGHP}). 
	By Proposition~\ref{prop:upper semicontinuity}, it follows that $\mathrm{index}(S_q) \leq 1$ for all $q \in \mathbb{P}^9 \setminus S^2$. 
	Also, Theorem 4.1 in \cite{CC13} shows that ${\rm index}(S_q)=0$ for all $q \in S^3 \setminus S^2$ and ${\rm index} (S_q ) \ge 1$ for all $q \in \mathbb{P}^9 \setminus S^3$. 
	Consequently, 
	\begin{equation*}
		{\rm index} (S_q ) = 1
	\end{equation*}
	for all $q \in \mathbb{P}^9 \setminus S^3$.
\end{example} 

Therefore, for a given higher dimensional projective variety $X$, it is an interesting problem to determine when ${\rm index} (X_q)$ behaves as in Example~\ref{ex:projection of scroll}, and when it behaves like the Veronese cubic surface in Example~\ref{ex:Veronese cubic surface}. 
In the latter case, it is also of interest to find a lower bound for ${\rm index} (X_q )$.

\end{document}